\documentclass[fleqn, 12pt]{article}
\pdfoutput=1
\usepackage[ngerman,american]{babel}
\usepackage[utf8]{inputenc}
\usepackage{geometry}
\usepackage{array}
\usepackage{float}
\usepackage[urlcolor=blue]{hyperref}
\usepackage{dsfont}
\usepackage{subcaption}
\usepackage{latexsym}
\usepackage{graphicx}
\usepackage{mathtools, amssymb,amsthm}
\usepackage[shortlabels]{enumitem}
\usepackage{pdfpages}
\usepackage[]{algorithm2e}
\usepackage{pdflscape}
\usepackage{xspace}
\usepackage{setspace}
\usepackage{xcolor}
\usepackage{diagbox}
\usepackage{booktabs}

\usepackage[authoryear,round,colon,sort&compress]{natbib}
\bibpunct{(}{)}{,}{autheryear}{,}{,}

\definecolor{airforceblue}{rgb}{0.36, 0.54, 0.66}
\definecolor{darkergreen}{rgb}{0.0, 0.5, 0.0}
\hypersetup{
    colorlinks,
    linkcolor={darkergreen},
    citecolor={airforceblue},
    urlcolor={blue!80!black}
}

\title{Dimension-invariant uniform consistency of the empirical spatial distribution function and its associated spatial depth estimator}

\date{}
\author{Felix Gnettner\footnote{Department of Mathematics and Statistics, South Dakota State University. E-mail: felix.gnettner@sdstate.edu}, \ Hyemin Yeon\footnote{Department of Mathematical Sciences, Kent State University. E-mail: hyeon1@kent.edu}, \ Piotr Kokoszka\footnote{Department of Statistics, Colorado State University. E-mail: piotr.kokoszka@colostate.edu}}
\makeatletter

\newcommand{\E}{\mathbb{E}}
\renewcommand{\P}{\mathbb{P}}

\newcommand{\dint}{\mathrm{d}}
\newcommand{\deq}{\overset{\mathcal D}{=}}

\makeatother

\theoremstyle{plain}
\newtheorem{Theorem}{Theorem}[section]

\newtheorem{Remark}[Theorem]{Remark}
\newtheorem{Lemma}[Theorem]{Lemma}

\expandafter\let\expandafter\oldproof\csname\string\proof\endcsname
\let\oldendproof\endproof
\renewenvironment{proof}[1][\proofname]{\oldproof[\bfseries #1]}{\oldendproof}
\begin{document}
\maketitle
\abstract{We provide a proof that the empirical spatial distribution estimator in $\mathbb R^d$ as well as the corresponding plug-in estimator of the spatial depth are uniformly $L^1$-consistent. The consistency rate only depends on the sample size $n$, not on the dimension $d$ or any tuning or regularization parameters. This is a rare property. The result of this note originates from a conversation with ChatGPT 5.4 Pro as part of some of our own earlier experiments on its mathematical reasoning capabilities.}

\section{Introduction}
Consider iid $\mathbb R^d$-valued random elements $X_1,...,X_n \sim P^X$. The spatial sign of a point $x \in \mathbb R^d$ is defined as $\mathcal S_x = x/\|x \|_{\mathbb R^d}$ (using the convention that $\mathcal S_{0_d} = 0_d$), where $\| \cdot \|_{\mathbb R^d}$ denotes the Euclidean norm. In the following, $\langle \cdot,\cdot,\rangle_{\mathbb R^d}$ denotes the standard Euclidean inner product.

The population spatial distribution function of $P^X$ maps any point $x \in \mathbb R^d$ to the unit ball in this space. It is defined as \[S_x = \int \mathcal{S}_{x-w} \ \dint P^X(w) = \E_X \left( \mathcal S_{x-X} \right), \qquad x \in \mathbb R^d.\] Its empirical version with respect to the sample $X_1,\ldots,X_n$ and its empirical probability measure $\widehat P_n^X$ is \[\widehat S_x = \int \mathcal{S}_{x-w} \ \dint \widehat P_n^X(w) = \frac 1n \sum_{i=1}^n \mathcal S_{x-X_i}, \qquad x \in \mathbb R^d.\]
The corresponding population spatial depth, respectively its empirical version, are defined as \[D_S(x,P^X) = 1-\|S_x\|_{\mathbb R^d}\qquad \text{and} \qquad D_S(x,\widehat P_n^X) = 1-\|\widehat S_x\|_{\mathbb R^d},\]
and the above definitions of $S_x, \widehat S_x$ and $D_S$ can be extended to general separable Hilbert spaces $\mathbb H$ via replacing $\|\cdot \|_{\mathbb R^d}$ by $\| \cdot \|_{\mathbb H}$. 

The spatial distribution function was initially defined by \cite{Chaudhuri96}. \citet[Theorem 2.5]{Koltchinskii97} showed that it uniquely characterizes the underlying probability measure $P^X$ in $\mathbb{R}^d$. It is a foundation for defining spatial ranks and quantiles. The spatial depth \citep{VardiZhang2000,Serfling2002,Gao2003} measures the relative deepness of a point $x \in \mathbb R^d$ with respect to $P^X$. Its argmax defines the spatial or geometric median. If $x$ is a central point, the norm of $S_x$ is close to 0. In case $x$ is outlying, the norm of $S_x$ is closer to 1.  Very recently, \cite{Konen26} proved that the spatial depth also uniquely characterizes the underlying probability measure $P^X$. It is one of the few, if not the only, tuning parameter free depth functions that does not degenerate for $d \to \infty$ \citep{Chakraborty14b}, has a direct analogue in spaces of infinite dimension \citep{Chakraborty14a}, is numerically easy to evaluate, and has multiple generalizations to non-standard data \citep{Konen23}. 

\paragraph*{Motivation and contribution}
There exist only very few constructions of depth functions which have estimators that are uniformly consistent with a dimension-invariant rate. The $h$-depth \citep{Cuevas2007} is probably the most prominent example with this property \citep{Wynne25}. Its sample estimator is basically a classical kernel density estimator with a fixed user-specified bandwidth independent of the sample size $n$ and the dimension $d$. 

For the spatial depth in separable Hilbert spaces $\mathbb H$ of infinite dimension\linebreak \cite{Chakraborty14a} proved, under some technical assumptions, uniform consistency on compact subsets $K\subseteq \mathbb H$, i.e. \[ \sup_{x \in K} \left|D_S(x,\widehat P_n^X)-D_S(x,P^X) \right|  \leq \sup_{x \in K} \left\|\widehat S_x - S_x \right\|_{\mathbb H} = O_P\left( \frac{1}{\sqrt n}\right).\]
Since the unit ball in an infinite-dimensional Hilbert space $\mathbb H$ is not compact, such an assumption can cause difficulties for statistical inference. Similarly, \cite{Yeon25} propose a regularized version of the Tukey depth \citep{Tukey} in separable Hilbert spaces of infinite dimension and prove its uniform consistency on totally bounded subsets.

In the following, we present a short proof that the empirical spatial distribution estimator in $\mathbb R^d$ as well as the corresponding plug-in estimator of the spatial depth are uniformly $L^1$-consistent on the entire space $\mathbb R^d$ with a rate that only depends on the sample size $n$, not on the dimension $d$.
The proof is based on obtaining an upper bound on the Rademacher complexity of the corresponding function class. The Rademacher complexity quantifies the  generalization quality of function estimators. Obtaining dimension-invariant Rademacher bounds plays an important role in statistical learning theory and may help to improve machine learning methods. Statistically meaningful function classes with dimension-invariant Rademacher complexities are rare. Examples include bounded balls in reproducing kernel Hilbert spaces \citep[Lemma 22]{Bartlett2002}, Euclidean norm-bounded linear classes \citep[Theorem 3]{Kakade2008}, certain norm-controlled neural networks \citep{Golowich20,Sellke24}, some bounded classes of finite cardinality \citep[Lemma 5.2]{Massart2000} as well as certain transformations \citep[Theorem 12]{Bartlett2002} of the aforementioned classes.  

The spatial distribution function and the spatial depth do not require any boundedness and moment assumptions, but uniquely characterize the underlying probability measure. This indicates that self normalization, on which our considered function class heavily relies, can be a powerful tool to construct estimators that do not suffer from the curse of dimensionality. The proof incorporates an uncommon technique that does neither rely on covering or packing number nor on compactness arguments, which can cause problems for obtaining dimension-invariant rates. It also does not rely on smoothness assumptions.

\section{Results}
In the following we present the results and the corresponding proofs.

\begin{Theorem}\label{thm_main}
For a sample of iid random elements $X_1,\ldots,X_n \sim P^X$ with values in $\mathbb R^d$, it holds
\[\E \left( \sup_{x \in \mathbb R^d} \left|D_S(x,\widehat P_n^X)-D_S(x,P^X) \right| \right) \leq \E \left(\sup_{x \in \mathbb R^d} \left\|\widehat S_x - S_x \right\|_{\mathbb R^d} \right) \leq \frac{4 \cdot \sqrt{\pi}}{\sqrt n}.\]
\end{Theorem}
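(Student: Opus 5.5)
The first inequality is immediate. Since $D_S(x,\widehat P_n^X)-D_S(x,P^X)=\|S_x\|_{\mathbb R^d}-\|\widehat S_x\|_{\mathbb R^d}$, the reverse triangle inequality gives $|D_S(x,\widehat P_n^X)-D_S(x,P^X)|\le \|\widehat S_x-S_x\|_{\mathbb R^d}$ for every $x$. Taking the supremum and then expectations gives the claim.

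For the second inequality I would start with symmetrization. Write $\|\widehat S_x-S_x\|_{\mathbb R^d}=\sup_{\|v\|\le1}\langle v,\widehat S_x-S_x\rangle_{\mathbb R^d}$, so that we are dealing with the empirical process indexed by the class $\{w\mapsto\langle v,\mathcal S_{x-w}\rangle\}$, which is uniformly bounded by $1$. Introduce an independent ghost sample and iid Rademacher signs $\varepsilon_i$. The standard argument then gives
\[\E\sup_x\|\widehat S_x-S_x\|\le \frac 2n\,\E\sup_{x\in\mathbb R^d}\Big\|\sum_{i=1}^n\varepsilon_i\,\mathcal S_{x-X_i}\Big\|_{\mathbb R^d}.\]
Measurability of the supremum can be handled by working with a countable dense set of $x$, away from the sample points.

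The key step is a dimension-free representation of the spatial sign through a Gaussian direction. Let $g\sim N(0,I_d)$ be independent of everything else. For $y\neq0$, rotation invariance gives $\E_g[g\,\mathrm{sign}(\langle g,y\rangle)]=\sqrt{2/\pi}\,y/\|y\|$, that is,
\[\mathcal S_y=\sqrt{\pi/2}\;\E_g\big[g\,\mathrm{sign}(\langle g,y\rangle)\big].\]
This identity also holds for $y=0$ if we set $\mathrm{sign}(0)=0$. Applying it with $y=x-X_i$ and pairing with a unit vector $v$ gives
\[\Big\langle v,\sum_i\varepsilon_i\mathcal S_{x-X_i}\Big\rangle=\sqrt{\pi/2}\;\E_g\Big[\langle v,g\rangle\sum_i\varepsilon_i\,\mathrm{sign}\big(\langle g,x\rangle-\langle g,X_i\rangle\big)\Big].\]
Now bound the inner sum by
\[M(g):=\sup_{t\in\mathbb R}\Big|\sum_i\varepsilon_i\,\mathrm{sign}(t-\langle g,X_i\rangle)\Big|.\]
This quantity no longer depends on $x$ or $v$, and it is a one-dimensional half-line supremum. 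Cauchy--Schwarz together with $\E_g\langle v,g\rangle^2=1$ then gives
\[\sup_x\Big\|\sum_i\varepsilon_i\mathcal S_{x-X_i}\Big\|\le\sqrt{\pi/2}\,\big(\E_gM(g)^2\big)^{1/2},\]
uniformly in $x$. After taking $\E_{\varepsilon}$ and applying Jensen's inequality, it remains to bound $\E_\varepsilon M(g)^2$ for fixed $g$ and fixed data.

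For the final step, order the projections $\langle g,X_i\rangle$. Then $\sum_i\varepsilon_i\,\mathrm{sign}(t-\langle g,X_i\rangle)$ equals $(\text{partial sum up to }k)-(\text{rest})$, up to tied terms that contribute $0$. Hence $M\le 2\max_{k}|\sum_{j\le k}\varepsilon_{(j)}|+|\sum_i\varepsilon_i|$. More carefully, $M\le \max_k|2T_k-T_n|$ with $T_k$ the partial sums in sorted order. Doob's $L^2$ maximal inequality gives $\E\max_k T_k^2\le 4n$, so $\E M^2\le c\,n$ with an absolute constant $c$. Combining everything yields a bound of the form $\tfrac{2}{n}\sqrt{\pi/2}\sqrt{cn}=C\sqrt\pi/\sqrt n$. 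Its only dependence is on $n$, because the Gaussian trick removes $d$ completely.

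The main obstacle is sharpening the constant to exactly $4\sqrt\pi$. With $\sqrt{\pi/2}$ from the Gaussian step and the symmetrization factor $2$, this requires $\E M^2\le 8n$. A crude estimate combining the triangle and Doob bounds overshoots, so I would bound $\E\max_k(2T_k-T_n)^2$ directly. One route is to split at the maximizing index and use the reflection principle, or Doob applied separately to the forward and backward partial sums. Treating ties and the convention $\mathcal S_0=0$ consistently is a minor technical point.
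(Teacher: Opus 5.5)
\textbf{Gap: the constant $4\sqrt\pi$ is not established.} As you compute, you need $\E_\varepsilon M^2\le 8n$. Your proposal does not deliver this. Doob's inequality gives only $\E\max_k T_k^2\le 4n$. Applying it to the forward sums and to the backward sums $T_n-T_k$, and using $(a+b)^2\le 2a^2+2b^2$, yields $\E M^2\le 16n$. That gives the bound $4\sqrt{2\pi}/\sqrt n$, off by a factor $\sqrt 2$. So ``Doob applied separately to the forward and backward partial sums'' does not close the gap. Bounding $\E\max_k(2T_k-T_n)^2$ ``directly'' by splitting at the maximizer is left unspecified.

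Your diagnosis is also slightly off. The triangle split $\max_k|2T_k-T_n|\le\max_k|T_k|+\max_k|T_n-T_k|$ is not what loses; Doob's constant $4$ is. The missing ingredient is L\'evy's maximal inequality for independent symmetric summands,
\[\P\Bigl(\max_k|T_k|\ge t\Bigr)\le 2\,\P(|T_n|\ge t),\qquad t>0.\]
Via the layer-cake formula this gives $\E\max_k T_k^2\le 2\E T_n^2=2n$. The same holds for the reversed partial sums $T_n-T_k$, which are partial sums of iid Rademacher variables. Hence $\E M^2\le 2(2n+2n)=8n$, which is exactly the paper's route. For Rademacher steps the reflection principle gives precisely this L\'evy bound, so your reflection idea works if you apply it to $\max_k|T_k|$ this way.

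\textbf{What is correct.} The reverse triangle inequality, the symmetrization, the sorting reduction to $M\le\max_k|2T_k-T_n|$, and the Gaussian representation with Cauchy--Schwarz are all correct. The Gaussian step $\mathcal S_y=\sqrt{\pi/2}\,\E_g[g\,\mathrm{sgn}(\langle g,y\rangle)]$ is a genuinely different and more elementary route than the paper's. The paper proves the same inequality $\|\sum_i a_i v_i\|^2\le\frac{\pi}{2}\E_Y(\sum_i a_i\,\mathrm{sgn}\langle Y,v_i\rangle)^2$ via Sheppard's formula, the power series of $\arcsin$, and positive semi-definiteness of products of kernels. You get the same factor $\pi/2$ from rotation invariance alone.
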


\begin{proof}
Consider a class $\mathbb{F}$ of mappings with image in a separable Banach space $B$. The Rademacher complexity of the function class $\mathbb{F}$ with respect to $X_1,...,X_n$ is defined as
\begin{align*}
R_n(\mathbb{F}) = \E\left(\sup_{f \in \mathbb{F}} \left| \left| \frac{1}{n} \sum_{i =1}^n \epsilon_i \cdot f(X_i) \right| \right|_{B}\right).
\end{align*}
Here, $\epsilon_1,...,\epsilon_n$ are independent Rademacher-distributed random variables that do not depend on $X_1,\ldots,X_n$.
We set $B=\mathbb R^d$ and consider the function class $\mathbb{F} = \{f_x: x \in \mathbb R^d\}$, where $f_x(X) = \mathcal S_{x-X}$. Each function $f_x$ in this class is indexed, and thus represented, by some $x \in \mathbb R^d$. This function class is bounded from above by 1. A standard symmetrization argument \citep[Equations (4.17) and (4.18)]{Wainwright2019} entails that \[\E \left(\sup_{x \in \mathbb R^d} \left\|\widehat S_x - S_x \right\|_{\mathbb R^d} \right) \leq 2 \cdot R_n(\mathbb{F}).\] Lemma~\ref{lemma2} below shows that $R_n(\mathbb{F}) \leq 2\sqrt{\pi}/\sqrt n$. So, the assertion follows.

\end{proof}
\begin{Lemma}\label{lemma2}
Consider two independent samples of iid random elements: $X_1,...,X_n$ has values in $\mathbb R^d$ and $\epsilon_1,...,\epsilon_n$ follow the Rademacher distribution. Then,  for any $d \in \mathbb N$ it holds that
\[\E\left(\sup_{x \in \mathbb R^d}\left\|\frac 1n \sum_{i=1}^n \epsilon_i \cdot \mathcal S_{x-X_i}\right\|_{\mathbb R^d} \right) \leq \frac{2\sqrt{\pi}}{\sqrt n}. \]
\end{Lemma}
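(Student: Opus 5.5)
The plan is to linearize the spatial sign by averaging half-space signs over the sphere. This reduces the supremum over $x\in\mathbb R^d$ to one-dimensional suprema over thresholds, which are controlled by maximal inequalities for Rademacher partial sums. Let $\sigma$ be the uniform probability measure on the unit sphere $\mathbb S^{d-1}$, and set $c_d=\int |\langle v,e_1\rangle_{\mathbb R^d}|\,\dint\sigma(v)$. By rotation invariance, for every $z\in\mathbb R^d$,
\[
\int v\,\mathrm{sign}(\langle v,z\rangle_{\mathbb R^d})\,\dint\sigma(v)=c_d\,\mathcal S_z .
\]
This also holds for $z=0_d$ with $\mathrm{sign}(0)=0$. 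Applying it with $z=x-X_i$ and writing $Y_i(v)=\langle v,X_i\rangle_{\mathbb R^d}$ and $t_v=\langle v,x\rangle_{\mathbb R^d}$ gives
\[
\sum_{i=1}^n\epsilon_i\mathcal S_{x-X_i}=\frac{1}{c_d}\int v\,g_x(v)\,\dint\sigma(v),\qquad g_x(v)=\sum_{i=1}^n\epsilon_i\,\mathrm{sign}\bigl(t_v-Y_i(v)\bigr).
\]

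Next I bound the norm by duality, taking the supremum over unit vectors $u$. Put $M(v)=\sup_{t\in\mathbb R}\bigl|\sum_i\epsilon_i\,\mathrm{sign}(t-Y_i(v))\bigr|$, which no longer depends on $x$. Then $|g_x(v)|\le M(v)$ for all $x$, and Cauchy--Schwarz gives
\[
\sup_x\Bigl\|\sum_i\epsilon_i\mathcal S_{x-X_i}\Bigr\|_{\mathbb R^d}\le\frac1{c_d}\sup_{\|u\|=1}\int|\langle u,v\rangle_{\mathbb R^d}|\,M(v)\,\dint\sigma(v)\le\frac{1}{c_d\sqrt d}\Bigl(\int M(v)^2\,\dint\sigma(v)\Bigr)^{1/2}.
\]
Here I used $\int\langle u,v\rangle^2\,\dint\sigma=1/d$.

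The dimension factor cancels by a Gaussian comparison. Let $g\sim N(0,I_d)$, and write $g=\|g\|\,V$ with $V\sim\sigma$ independent of $\|g\|$. Then
\[
\sqrt{2/\pi}=\E|g_1|=\E\|g\|\cdot c_d\le\sqrt d\,c_d,
\]
so $1/(c_d\sqrt d)\le\sqrt{\pi/2}$. Taking expectations and using Jensen together with Fubini yields
\[
\E\sup_x\|\cdots\|\le\sqrt{\pi/2}\Bigl(\int\E M(v)^2\,\dint\sigma(v)\Bigr)^{1/2}.
\]

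The remaining step, and the one where the constant is decided, is to show $\E M(v)^2\le Cn$ uniformly in $v$ and in the law of $X$. Condition on $X_1,\dots,X_n$ and order the sample by $Y_i(v)$; the $\epsilon$'s remain iid Rademacher after this reordering. Let $T_k$ be the corresponding partial sums and $T_n$ the total. For $t$ not equal to any $Y_i(v)$, the sum $\sum_i\epsilon_i\,\mathrm{sign}(t-Y_i)$ equals $2T_k-T_n$ for some $k$. Ties or $t=Y_i$ only produce averages of neighbouring values of this form, so the bound is unaffected. Hence
\[
M(v)\le\max_{0\le k\le n}|2T_k-T_n|\le 2\max_k|T_k|+|T_n|.
\]
Doob's $L^2$ inequality gives $\E\max_k T_k^2\le4n$, hence $\E M^2\le 25n$. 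This already yields a dimension-free bound $5\sqrt{\pi/2}/\sqrt n$ for the normalized quantity in Lemma~\ref{lemma2}. To reach the sharper constant $2\sqrt\pi$ I would refine this step. One option is to use the symmetry $T_k\mapsto T_n-T_k$, or reflection, to bound $\E\max_k(2T_k-T_n)^2$ directly, which tightens $\E M^2\le 25n$. Another is to avoid the crude triangle inequality and apply Doob to the reversed walk as well. I expect this one-dimensional maximal estimate, with good constants, to be the main obstacle; the spherical representation and the cancellation of $c_d\sqrt d$ are the conceptual core, and they make the bound independent of $d$.
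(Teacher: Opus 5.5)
You have not proved the lemma as stated. The last step gives $\E M(v)^2\le 25n$, so you obtain the constant $5\sqrt{\pi/2}\approx 6.27$, whereas the statement asserts $2\sqrt{\pi}\approx 3.54$. Your chain reads $\E\sup_x\|\frac1n\sum_i\epsilon_i\mathcal S_{x-X_i}\|\le \frac1n\sqrt{\pi/2}\,(\sup_v\E M(v)^2)^{1/2}$, so what you need is exactly $\E M(v)^2\le 8n$.

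The refinements you list do not reach this. Doob's $L^2$ inequality applied to the forward and the reversed walk gives only $\E\max_k(2T_k-T_n)^2\le 2\cdot 4n+2\cdot 4n=16n$, i.e.\ the constant $2\sqrt{2\pi}\approx 5.01$. The factor $4$ in Doob's inequality is what costs you.

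The missing ingredient is L\'evy's maximal inequality for sums of independent symmetric variables: $\P(\max_{0\le k\le n}|T_k|\ge t)\le 2\,\P(|T_n|\ge t)$ for $t>0$. Via the layer-cake formula it gives $\E\max_k T_k^2\le 2\,\E T_n^2=2n$. Combine it with three facts:
\begin{itemize}
\item $2T_k-T_n=T_k-(T_n-T_k)$;
\item $(a+b)^2\le 2a^2+2b^2$;
\item $\max_k|T_n-T_k|$ has the same law as $\max_k|T_k|$ (reverse the order of the reordered Rademacher summands).
\end{itemize}
This yields $\E M(v)^2\le 4\,\E\max_k T_k^2\le 8n$, and your bound becomes exactly $\sqrt{\pi/2}\,\sqrt{8n}/n=2\sqrt{\pi}/\sqrt n$. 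Both pieces are needed: L\'evy combined with your cruder split $2\max_k|T_k|+|T_n|$ still only gives $(2\sqrt2+1)^2n\approx 14.7n$. You gesture at ``reflection'', but this is precisely the step that has to be carried out, and it is this inequality that does it.

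Apart from this, the rest of your argument is correct and produces the factor $\sqrt{\pi/2}$ by a genuinely different route from the paper's.
\begin{itemize}
\item The paper compares quadratic forms. Lemma~\ref{lemma1} uses Sheppard's formula $\E_Y[\mathrm{sgn}\langle Y,v_i\rangle\,\mathrm{sgn}\langle Y,v_j\rangle]=\frac2\pi\arcsin\langle v_i,v_j\rangle$ and the positive definiteness of the odd $\arcsin$ power series (Schur products). This gives $\|\sum_i\epsilon_i\mathcal S_{x-X_i}\|^2\le\frac\pi2\E_Y(\sum_i\epsilon_i\,\mathrm{sgn}\langle Y,x-X_i\rangle)^2$, followed by one application of Jensen at the outer level.
\item You instead linearize $\mathcal S_z$ as a spherical average of half-space signs, then use duality, Cauchy--Schwarz and $c_d\sqrt d\ge\sqrt{2/\pi}$.
\end{itemize}
Your route avoids Sheppard's formula and the Schur product theorem and works with the norm rather than its square. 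It yields the same factor $\pi/2$ in front of $\E M^2$. So once the one-dimensional estimate is upgraded as above, your argument reproduces the paper's constant exactly.
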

\begin{proof}
It suffices to prove that \[\E_{\epsilon_1,...,\epsilon_n}\left(\sup_{x \in \mathbb R^d} \left\| \sum_{i=1}^n \epsilon_i \cdot \mathcal S_{x-X_i}\right\|_{\mathbb R^d} \right) \leq 2\sqrt{\pi n} \text{ almost surely}. \]
Then, the assertion follows from the law of irerated expectations.

Jensen's inequality entails that
\begin{equation}\label{eq:1.3}
  \E_{\epsilon_1,...,\epsilon_n}\left(\sup_{x \in \mathbb R^d} \left\| \sum_{i=1}^n \epsilon_i \cdot \mathcal S_{x-X_i}\right\|_{\mathbb R^d} \right) \leq \sqrt{\E_{\epsilon_1,...,\epsilon_n}\left(\sup_{x \in \mathbb R^d} \left\| \sum_{i=1}^n \epsilon_i \cdot \mathcal S_{x-X_i}\right\|_{\mathbb R^d}^2 \right)}.
\end{equation}
So, it suffices to prove that $\E_{\epsilon_1,...,\epsilon_n}\left(\sup_{x \in \mathbb R^d} \left\| \sum_{i=1}^n \epsilon_i \cdot \mathcal S_{x-X_i}\right\|_{\mathbb R^d}^2 \right) \leq 4\pi n$ almost surely.

Let $Y \sim \mathrm N(0_d,I_d)$ be independent of $X_1,...,X_n$ and $\epsilon_1,...,\epsilon_n$. By Lemma~\ref{lemma1} it holds
\begin{align}\label{eq:1}
    \left\| \sum_{i=1}^n \epsilon_i \cdot \mathcal S_{x-X_i}\right\|_{\mathbb R^d}^2 &= \sum_{i=1}^n \sum_{j=1}^n \epsilon_i \epsilon_j \langle \mathcal S_{x-X_i}, \mathcal S_{x-X_j} \rangle_{\mathbb R^d}\nonumber\\
    &\leq  \frac{\pi}{2} \mathbb E_Y \left( \left(\sum_{i=1}^n \epsilon_i \cdot \mathrm{sgn}(\langle Y,\mathcal S_{x-X_i} \rangle_{\mathbb R^d})  \right)^2 \right) \nonumber\\
    &= \frac{\pi}{2} \mathbb E_Y \left( \left(\sum_{i=1}^n \epsilon_i \cdot \mathrm{sgn}(\langle Y,x \rangle_{\mathbb R^d} - \langle Y,X_i \rangle_{\mathbb R^d} )  \right)^2 \right).
\end{align}
The last equality arises from the fact that the norm in the $\mathcal S_{x-X_i}$ does not change the sign, i.e.\ $\mathrm{sgn}(\langle Y,\mathcal S_{x-X_i} \rangle_{\mathbb R^d}) = \mathrm{sgn}(\langle Y,x-X_i \rangle_{\mathbb R^d})$.

Now, \eqref{eq:1} implies that
\begin{multline}\label{eq:1.4}
  \mathbb E_{\epsilon_1,...\epsilon_n}\left(\sup_{x \in \mathbb R^d}\left\| \sum_{i=1}^n \epsilon_i \cdot \mathcal S_{x-X_i}\right\|_{\mathbb R^d}^2\right)\\
\leq \frac{\pi}{2} \mathbb E_{Y}\left( \E_{\epsilon_1,...\epsilon_n} \left( \sup_{x \in \mathbb R^d}\left|\sum_{i=1}^n \epsilon_i \cdot \mathrm{sgn}(\langle Y,x \rangle_{\mathbb R^d} - \langle Y,X_i \rangle_{\mathbb R^d} )  \right|^2 \right)\right).
\end{multline}
Set $Z_i = \langle Y,X_i \rangle_{\mathbb R^d}$. Since $\langle Y,x \rangle_{\mathbb R^d}$ can be any real number, the supremum over $x \in \mathbb R^d$ can be replaced by a supremum over $t \in \mathbb R$, and it holds
\begin{equation}\label{eq:1.5}
  \sup_{x \in \mathbb R^d}\left|\sum_{i=1}^n \epsilon_i \cdot \mathrm{sgn}(\langle Y,x \rangle_{\mathbb R^d} - \langle Y,X_i \rangle_{\mathbb R^d} )  \right| \overset{\text{a.s.}}{=} \sup_{t \in \mathbb R}\left|\sum_{i=1}^n \epsilon_i \cdot \mathrm{sgn}(t - Z_i )  \right|.
\end{equation}
Now, we sort the $Z_i$-values in increasing order, i.e.\ $Z_{\Pi(1)} \leq ... \leq Z_{\Pi(n)}$ and define $\eta_k = \epsilon_{\Pi(k)}$ and $S_k = \sum_{j=1}^k \eta_j$ with $S_0=0$. Since $\Pi$ depends only on $Y,X_1,...,X_n$, which are independent of $\epsilon_1,...,\epsilon_n$, the distribution of $\eta_1,...,\eta_n$ is the same as the distribution of $\epsilon_1,...,\epsilon_n$, i.e.\ $\eta_1,...,\eta_n$ are iid Rademacher distributed random variables. If $t \in (Z_{\Pi(k)},Z_{\Pi(k+1)})$, then \[\mathrm{sgn}(t-Z_{\Pi(j)}) = \begin{cases}
  +1, & j \leq k,\\
  -1, & j > k.
\end{cases} \]
Thus, \[\sum_{i=1}^n \epsilon_i \cdot \mathrm{sgn}(t - Z_i ) = \sum_{j=1}^k \eta_j - \sum_{j=k+1}^n \eta_j = 2S_k-S_n.\]
If the random variables $Z_1,...,Z_n$ take distinct values, it holds \[\sup_{t \in \mathbb R}\left|\sum_{i=1}^n \epsilon_i \cdot \mathrm{sgn}(t - Z_i )  \right| = \max_{k\in \{0,...,n\}} |2S_k-S_n|.\]
If there are ties, it still holds
\begin{equation}\label{eq:2}
  \sup_{t \in \mathbb R}\left|\sum_{i=1}^n \epsilon_i \cdot \mathrm{sgn}(t - Z_i )  \right| \leq \max_{k\in \{0,...,n\}} |2S_k-S_n|.
\end{equation}
Since $\max_{k\in \{0,...,n\}} |2S_k-S_n| \leq \max_{k\in \{0,...,n\}} |S_k| + \max_{k\in \{0,...,n\}} |S_n-S_k|$ and\linebreak $\max_{k\in \{0,...,n\}} |S_k| \deq \max_{k\in \{0,...,n\}} |S_n-S_k|$, it holds \[\E_{\epsilon_1,...\epsilon_n}  \left(\max_{k\in \{0,...,n\}} |2S_k-S_n|^2 \right) \leq 4\E_{\epsilon_1,...\epsilon_n}  \left( \max_{k\in \{0,...,n\}} |S_k|^2 \right).\]
Lévy's inequality entails that $\P_{\epsilon_1,...,\epsilon_n}(\max_{k\in \{0,...,n\}} |S_k| \geq t) \leq 2 \cdot \P_{\epsilon_1,...,\epsilon_n} \left(|S_n| \geq t \right)$ holds for all $t>0$. The layer cake representation of moments yields that,
\begin{align}\label{eq:3}
    \E_{\epsilon_1,...,\epsilon_n}\left(\max_{k\in \{0,...,n\}} |S_k|^2 \right) &= \int_0^\infty 2t \cdot \P_{\epsilon_1,...,\epsilon_n} \left(\max_{k\in \{0,...,n\}} |S_k|>t\right) \ \dint t \nonumber \\
    &\leq 2 \cdot \int_0^\infty 2t \cdot \P_{\epsilon_1,...,\epsilon_n} \left( |S_n|>t\right) \ \dint t \nonumber \\
    &= 2 \cdot \E_{\epsilon_1,...,\epsilon_n} \left( S_n^2 \right).
\end{align}
Moreover,
\begin{equation}\label{eq:4}
  \E_{\epsilon_1,...,\epsilon_n} \left( S_n^2 \right) = n,
\end{equation}
as the summands of $S_n$ are independent Rademacher random variables.

Combining \eqref{eq:4} with \eqref{eq:3}, \eqref{eq:2}, \eqref{eq:1.5}, \eqref{eq:1.4} and \eqref{eq:1.3} proves the assertion.
\end{proof}
\begin{Lemma}\label{lemma1}
  For any vectors $v_1,...,v_n \in \mathbb S^{d-1} \cup \{0_d\}$, any scalars $a_1,...,a_n$ and $Y \sim \mathrm N(0_d,I_d)$, it holds
  \[\sum_{i=1}^n\sum_{j=1}^n a_ia_j \langle v_i,v_j \rangle_{\mathbb R^d} \leq  \frac{\pi}{2} \mathbb E_Y \left( \left(\sum_{i=1}^n a_i \cdot \mathrm{sgn}(\langle Y,v_i \rangle_{\mathbb R^d})  \right)^2 \right).\]
\end{Lemma}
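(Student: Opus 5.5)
We expand the right-hand side and reduce the claim to a matrix inequality built from Grothendieck's identity. Squaring out the expectation gives
\[\mathbb E_Y\left(\left(\sum_{i=1}^n a_i\,\mathrm{sgn}(\langle Y,v_i\rangle_{\mathbb R^d})\right)^2\right)=\sum_{i=1}^n\sum_{j=1}^n a_ia_j\,\mathbb E_Y\bigl(\mathrm{sgn}(\langle Y,v_i\rangle_{\mathbb R^d})\,\mathrm{sgn}(\langle Y,v_j\rangle_{\mathbb R^d})\bigr).\]

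For unit vectors $v_i,v_j$, the pair $(\langle Y,v_i\rangle,\langle Y,v_j\rangle)$ is centered bivariate Gaussian with unit variances and correlation $\rho_{ij}=\langle v_i,v_j\rangle$. Grothendieck's identity (equivalently, Sheppard's formula $\P(\text{signs differ})=\arccos(\rho)/\pi$) therefore gives
\[\mathbb E_Y\bigl(\mathrm{sgn}\,\mathrm{sgn}\bigr)=1-\tfrac{2}{\pi}\arccos\rho_{ij}=\tfrac{2}{\pi}\arcsin\rho_{ij}.\]
If $v_i=0_d$, then $\mathrm{sgn}(0)=0$ and both this entry and $\langle v_i,v_j\rangle$ vanish, so the identity $\mathbb E=\frac2\pi\arcsin\langle v_i,v_j\rangle$ holds for all $i,j$. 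The claim is therefore equivalent to
\[\sum_{i,j}a_ia_j\left(\arcsin\langle v_i,v_j\rangle-\langle v_i,v_j\rangle\right)\ge 0,\]
that is, to positive semidefiniteness of the matrix $M_{ij}=\arcsin(G_{ij})-G_{ij}$, where $G$ is the Gram matrix of the $v_i$.

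This positive semidefiniteness is the key step, and I would prove it via the Taylor series of the arcsine:
\[\arcsin(s)-s=\sum_{k\ge1}c_k s^{2k+1},\qquad c_k=\frac{(2k)!}{4^k(k!)^2(2k+1)}>0,\]
which converges absolutely for $|s|\le1$, including the endpoints, since $c_k\sim k^{-3/2}$. Hence $M=\sum_{k\ge1}c_k G^{\circ(2k+1)}$, with $\circ$ denoting the entrywise (Hadamard) power. $G$ is positive semidefinite as a Gram matrix. By the Schur product theorem each Hadamard power $G^{\circ m}$ is positive semidefinite, being the Gram matrix of the tensors $v_i^{\otimes m}$. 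A nonnegative combination of such matrices, with entrywise absolutely convergent series, remains positive semidefinite, since the quadratic form passes to the limit. This gives $a^\top M a\ge0$, and the inequality follows after multiplying by $\pi/2$.

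The main care points are the endpoint convergence at $|\rho|=1$ and the zero-vector convention; both are handled above. A scalar $\arcsin(s)\ge s$ argument alone would not suffice, because the $a_ia_j$ have mixed signs; the Hadamard-power decomposition is what supplies positivity of the quadratic form.
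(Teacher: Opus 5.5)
Your proposal is correct and follows the paper's proof essentially step for step. Both arguments expand the square and use Sheppard's formula to obtain the kernel $\frac{2}{\pi}\arcsin\langle v_i,v_j\rangle$. Both then show that $\arcsin(G)-G$ is positive semidefinite via the positive-coefficient Taylor series of $\arcsin$ and the fact that entrywise products of positive semidefinite kernels are positive semidefinite. Your explicit treatment of endpoint convergence at $|\rho|=1$ and of zero vectors (using $\mathrm{sgn}(0)=0$) is simply a more careful version of the paper's remark that zero summands can be dropped.
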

\begin{proof}
  Consider $v_1,...,v_n \in \mathbb S^{d-1}$. The vector $(\langle Y,v_i \rangle_{\mathbb R^d}, \langle Y,v_j \rangle_{\mathbb R^d})^\top$ has a bivariate normal distribution, and the correlation between its entries is $\langle v_i,v_j \rangle_{\mathbb R^d}$. The Sheppard formula \citep[Corollary 3.1]{Li09} entails that \[ \mathbb E_Y \left(\mathrm{sgn}(\langle Y,v_i \rangle_{\mathbb R^d} ) \cdot \mathrm{sgn}(\langle Y,v_j \rangle_{\mathbb R^d}) \right) = \frac{2}{\pi} \arcsin(\langle v_i,v_j \rangle_{\mathbb R^d}).\]
  So,
  \begin{align*}
      &\frac{\pi}{2}\mathbb E_Y \left( \left(\sum_{i=1}^n a_i \cdot \mathrm{sgn}(\langle Y,v_i \rangle_{\mathbb R^d})  \right)^2 \right) \\
&= \frac{\pi}{2} \sum_{i=1}^n \sum_{j=1}^n a_i\, a_j\, \mathbb E_Y \left(\mathrm{sgn}(\langle Y,v_i \rangle_{\mathbb R^d} ) \cdot \mathrm{sgn}(\langle Y,v_j \rangle_{\mathbb R^d} )\right)\\ &= \sum_{i=1}^n \sum_{j=1}^n a_i\, a_j\, \arcsin(\langle v_i,v_j \rangle_{\mathbb R^d}).
  \end{align*}
  Since $\arcsin(t)$ has a power series representation $ \arcsin(t) = t+ \sum_{m=1}^\infty c_m t^{2m+1}$, $t \in [-1,1]$, $c_m>0$, we obtain
  \begin{multline*}
  \sum_{i=1}^n \sum_{j=1}^n a_i\, a_j\, \arcsin(\langle v_i,v_j \rangle_{\mathbb R^d})\\ = \sum_{i=1}^n \sum_{j=1}^n a_i\, a_j\, \langle v_i,v_j \rangle_{\mathbb R^d}+ \sum_{m=1}^\infty c_m \sum_{i=1}^n \sum_{j=1}^n a_i\, a_j\,  \langle v_i,v_j \rangle_{\mathbb R^d}^{2m+1} .
  \end{multline*}
  As $c_m>0$ and the linear kernel is positive semi-definite and products of positive semi-definite kernels remain positive semi-definite kernels, it holds
  \begin{equation*}
  \begin{split}
  \frac{\pi}{2}\mathbb E_Y \left( \left(\sum_{i=1}^n a_i \cdot \mathrm{sgn}(\langle Y,v_i \rangle_{\mathbb R^d})  \right)^2 \right)  &= \sum_{i=1}^n \sum_{j=1}^n a_i\, a_j\, \arcsin(\langle v_i,v_j \rangle_{\mathbb R^d})\\ &\geq \sum_{i=1}^n \sum_{j=1}^n a_i\, a_j\, \langle v_i,v_j \rangle_{\mathbb R^d}.
  \end{split}
  \end{equation*}
  If some of the $v_i$-values are zero, we can simply remove the corresponding summands from the sums, and the assertion still holds.
\end{proof}

\begin{Remark}
Lemma~\ref{lemma2} and Lemma~\ref{lemma1} are based on a lengthy response of ChatGPT 5.4 Pro that contained small gaps and minor flaws. A direct request to prove the dimension-invariant consistency of the empirical spatial depth did not deliver a fruitful response. The authors of this paper carefully verified the proofs.
\end{Remark} 

\bibliography{literatureCOMPLETE}

@book{Wainwright2019,
  title={High-Dimensional Statistics: A Non-Asymptotic Viewpoint},
  author={Wainwright, M.J.},
  isbn={9781108498029},
  lccn={2018043475},
  series={Cambridge Series in Statistical and Probabilistic Mathematics},
  year={2019},
  publisher={Cambridge University Press}
}

@incollection{Li09,
  title={Gaussian integrals involving absolute value functions},
  author={Li, Wenbo V and Wei, Ang},
  booktitle={High dimensional probability V: the Luminy volume},
  volume={5},
  pages={43--60},
  year={2009},
  publisher={Institute of Mathematical Statistics},
  doi={10.1214/09-IMSCOLL504},
}

@article{Chaudhuri96,
author = {Probal Chaudhuri},
title = {On a Geometric Notion of Quantiles for Multivariate Data},
journal = {Journal of the American Statistical Association},
volume = {91},
number = {434},
pages = {862--872},
year = {1996},
publisher = {Taylor \& Francis},
doi = {10.1080/01621459.1996.10476954}
}

@article{Koltchinskii97,
author = {V. I. Koltchinskii},
title = {{M-estimation, convexity and quantiles}},
volume = {25},
journal = {The Annals of Statistics},
number = {2},
publisher = {Institute of Mathematical Statistics},
pages = {435 -- 477},
year = {1997},
doi = {10.1214/aos/1031833659},
}

@article{Konen26,
title={Spatial depth characterizes probability measures},
author={Gonz{\'a}lez-Sanz, Alberto and Konen, Dimitri},
journal={arXiv preprint arXiv:2607.04375},
year={2026},
doi={10.48550/arXiv.2607.04375}, 
}

@article{Chakraborty14a,
author = {Anirvan Chakraborty and Probal Chaudhuri},
title = {{The spatial distribution in infinite dimensional spaces and related quantiles and depths}},
volume = {42},
journal = {The Annals of Statistics},
number = {3},
publisher = {Institute of Mathematical Statistics},
pages = {1203 -- 1231},
year = {2014},
doi = {10.1214/14-AOS1226},
}

@Article{Chakraborty14b,
author="Chakraborty, Anirvan
and Chaudhuri, Probal",
title="On data depth in infinite dimensional spaces",
journal="Annals of the Institute of Statistical Mathematics",
year="2014",
month="Apr",
day="01",
volume="66",
number="2",
pages="303--324",
issn="1572-9052",
doi="10.1007/s10463-013-0416-y",
}

@article{VardiZhang2000,
author = {Yehuda Vardi and Cun-Hui Zhang},
title = {The multivariate $L_1$-median and associated data depth},
journal = {Proceedings of the National Academy of Sciences},
volume = {97},
number = {4},
pages = {1423-1426},
year = {2000},
doi = {10.1073/pnas.97.4.1423},
}

@InProceedings{Serfling2002,
author="Serfling, Robert",
editor="Dodge, Yadolah",
title="A Depth Function and a Scale Curve Based on Spatial Quantiles",
booktitle="Statistical Data Analysis Based on the L1-Norm and Related Methods",
year="2002",
publisher="Birkh{\"a}user Basel",
address="Basel",
pages="25--38",
isbn="978-3-0348-8201-9"
}

@article{Gao2003,
title = {Data depth based on spatial rank},
journal = {Statistics \& Probability Letters},
volume = {65},
number = {3},
pages = {217-225},
year = {2003},
issn = {0167-7152},
doi = {10.1016/j.spl.2003.06.003},
author = {Yonghong Gao},
}

@article{Konen23,
author = {Dimitri Konen and Davy Paindaveine},
title = {{Spatial quantiles on the hypersphere}},
volume = {51},
journal = {The Annals of Statistics},
number = {5},
publisher = {Institute of Mathematical Statistics},
pages = {2221 -- 2245},
year = {2023},
doi = {10.1214/23-AOS2332},
}

@article{Yeon25,
    author = {Yeon, Hyemin and Dai, Xiongtao and Lopez-Pintado, Sara},
    title = {Regularized halfspace depth for functional data},
    journal = {Journal of the Royal Statistical Society Series B: Statistical Methodology},
    volume = {87},
    number = {5},
    pages = {1553-1575},
    year = {2025},
    month = {11},
    issn = {1369-7412},
    doi = {10.1093/jrsssb/qkaf030},
}

@article{Wynne25,
author = {Wynne, George and Nagy, Stanislav},
title = {Statistical Depth Meets Machine Learning: Kernel Mean Embeddings and Depth in Functional Data Analysis},
journal = {International Statistical Review},
volume = {93},
number = {2},
pages = {317-348},
doi = {10.1111/insr.12611},
year = {2025}
}

@Article{Cuevas2007,
author="Cuevas, Antonio
and Febrero, Manuel
and Fraiman, Ricardo",
title="Robust estimation and classification for functional data via projection-based depth notions",
journal="Computational Statistics",
year="2007",
month="Sep",
day="01",
volume="22",
number="3",
pages="481--496",
issn="1613-9658",
doi="10.1007/s00180-007-0053-0",
}

@InProceedings{Tukey,

  author="Tukey, John W.",

  editor="Ralph D. James",
  
title="Mathematics and the Picturing of Data",

  booktitle="Proceedings of the International Congress of Mathematicians",
  
year="1974",
  
publisher="Canadian Mathematical Congress",
  volume="2",	
  
address="Vancouver",

  pages="523--531"

}

@article{Bartlett2002,
  title={Rademacher and gaussian complexities: Risk bounds and structural results},
  author={Bartlett, Peter L and Mendelson, Shahar},
  journal={Journal of machine learning research},
  volume={3},
  number={Nov},
  pages={463--482},
  year={2002}
}

@inproceedings{Massart2000,
  title={Some applications of concentration inequalities to statistics},
  author={Massart, Pascal},
  booktitle={Annales de la Facult{\'e} des sciences de Toulouse: Math{\'e}matiques},
  volume={9},
  number={2},
  pages={245--303},
  year={2000}
}

@inproceedings{Kakade2008,
  title={On the Complexity of Linear Prediction: Risk Bounds, Margin Bounds, and Regularization},
  author={Kakade, Sham M and Sridharan, Karthik and Tewari, Ambuj},
  booktitle={Advances in Neural Information Processing Systems},
  volume={21},
  year={2008}
}

@article{Sellke24,
title = {On size-independent sample complexity of ReLU networks},
journal = {Information Processing Letters},
volume = {186},
pages = {106482},
year = {2024},
issn = {0020-0190},
doi = {10.1016/j.ipl.2024.106482},
author = {Mark Sellke}
}

@article{Golowich20,
    author = {Golowich, Noah and Rakhlin, Alexander and Shamir, Ohad},
    title = {Size-independent sample complexity of neural networks},
    journal = {Information and Inference: A Journal of the IMA},
    volume = {9},
    number = {2},
    pages = {473-504},
    year = {2020},
    month = {06},
    issn = {2049-8772},
    doi = {10.1093/imaiai/iaz007},
}

\end{document}